\UseRawInputEncoding
\documentclass[12pt]{amsart}
\usepackage{a4wide,enumerate,xcolor,graphicx}
\usepackage{amsmath}
\usepackage{float}
\allowdisplaybreaks

\newcommand{\N}{{\mathbb N}}
\newcommand{\R}{{\mathbb R}}

\makeatletter
\@namedef{subjclassname@2020}{%
	\textup{2020} Mathematics Subject Classification}
\makeatother

\numberwithin{equation}{section}
\newtheorem{theorem}{Theorem}[section]

\newtheorem{proposition}[theorem]{Proposition}

\newtheorem{assumption}{Assumption}

\begin{document}

\title[strong propagation of chaos for multi-species cross-diffusion equations]{Quantitative estimates of propagation of chaos for multi-species cross-diffusion equations }

\author[Y. Li]{Yue Li}
\address{School of mathematics and statistics, Shandong Normal University, Jinan 250358, P. R. China}
\email{liyue2011008@163.com}

\author[Y. Yin]{Yefei Yin}
\address{School of Mathematics and Statistics, The University of Sydney, NSW 2006, Australia}
\email{yyin0132@uni.sydney.edu.au}

\author[Z. Zhang]{Zhipeng Zhang}
\address{School of Mathematical Sciences, Ocean University of China, Qingdao 266100, P. R. China}
\email{zhangzp@ouc.edu.cn}

\date{\today}

\thanks{Y. Li is supported by NSFC (Grant No. 12501268), Taishan Scholars Program of Shandong Province (Grant No. tsqnz20250715) and the Shandong Provincial Natural Science Foundation (Grant No. ZR2025QC1508). Z. Zhang  is supported by NSFC (Grant No. 12471215 and 12331007) and Taishan Scholars Program (Grant No. tsqn202507101)}

\begin{abstract}
In this paper, we prove the quantitative propagation of chaos results that allow us to derive multi-species cross-diffusion equations from moderately interacting stochastic particle system.
The quantitative propagation of chaos result in $L^1$-norm is obtained by the relative entropy method, and the proof is carried out in two steps.
In the first step, we quantify the relative entropy between the joint distribution of the particle system and the tensorised solution of the PDE at the intermediate level.
In the second step, we establish a rigorous convergence rate to the multi-species cross-diffusion equations by analyzing the $L^2$-distance between the solution of the intermediate-level PDE and that of the limiting PDE.
Furthermore, combining the strong $L^1$-convergence for the propagation of chaos with the $L^p$-estimates $(2\le p<\infty)$ for the marginal distribution of multi-species particle system,
we derive the corresponding $L^q$-result $(1<q<\infty)$ via interpolation.
\end{abstract}

\keywords{Cross-diffusion system; Multi-species dynamics; Moderately interacting particle systems; Relative entropy; Mean-field limit.}

\subjclass[2020]{35Q92, 82C22, 92D25.}

\maketitle


\section{Introduction}
In the last decades, an increasing interest in studying the cross-diffusion models, which refer to the systems of quasilinear parabolic equations characterized by a non-diagonal diffusion matrix,
is stimulated by extensive applications in fields such as cell biology, multicomponent gas flow and population dynamics \cite{Ju16}.
In this paper, we focus on the following multi-species cross-diffusion equations
\begin{equation}
\label{macro1}
\left\{
\begin{aligned}
\partial_t u_i-\sigma_i\Delta u_i&={\rm div} \Big(\sum_{j=1}^n a_{ij}u_i\nabla u_j \Big)\quad\mbox{in } \R^d,\;t>0,\\
u_i(0)&=u^0_i, \quad 1\le i\le n,
\end{aligned}
\right.
\end{equation}
where $d\geq 1$ denotes the space dimension,
$i\in\{1,\ldots,n\}$ is the species index, $u^0_i\geq 0$ in $\R^d$, $u=(u_1,\ldots,u_n)$ represents the vector of population densities,
and $\sigma_i>0$ and $a_{ij}\ge 0$ stand for the constant diffusion coefficients and the limit of the interaction potentials respectively.

This paper can be regarded as an extension of \cite{CDJ}, in which the model \eqref{macro1} was rigorously derived from a stochastic many-particle system that governs the motion of $n$ distinct particle species.
For each species $i$, the number of particles is given by $N_i \in \mathbb{N}$ $(1\le i\le n)$. Without loss of generality, we assume $N_i = N$ for all $ i\in\{1,\ldots, n\}$.
Let $(\Omega, \mathcal{F}, (\mathcal{F}_t)_{t \geq 0}, \mathbb{P})$ be a complete filtered probability space.
For each $i\in \{1, \ldots, n\}$ and $k \in\{ 1, \ldots, N\}$, we consider a $d$-dimensional $\mathcal{F}_t$-Brownian motion $(W^k_i(t))_{t \geq 0}$, which are assumed to be independent of each other.
Moreover, let $(\zeta^k_i)$ be independent and identically distributed random variables, independent of $(W^k_i(t))_{t \geq 0}$, with a common probability density function $u_i^0$.
We denote by $X^{k,N}_{\eta,i}(t)$ the position of the $k$-th particle belonging to the $i$-th species at time $t$. Its evolution is governed by the following dynamics
\begin{equation}
\label{micro1}
\left\{
\begin{aligned}
dX_{\eta,i}^{k,N}(t)&=-\sum_{j=1}^n\frac{1}{N}\sum_{l=1}^N\nabla V^{\eta}_{ij}(X^{k,N}_{\eta,i}(t)-X^{l,N}_{\eta,j}(t))dt+\sqrt{2\sigma_i}dW^k_i(t),\\
X^{k,N}_{\eta,i}(0)&=\zeta^k_i, \ \ \ \ 1\le i\le n,\; 1\le k\le N.
\end{aligned}
\right.
\end{equation}
Here the interaction potential $V^\eta_{ij}$ takes the form $V^\eta_{ij}(x)=\frac{1}{\eta^d}V_{ij}(\frac{x}{\eta})$,
where $\eta>0$, $V_{ij}\in C_0^\infty(\R^d)$ satisfying $\mbox{supp}(V_{ij})\subset B_1(0)$ and $\int_{\R^d}V_{ij} dx=a_{ij}$ $(1\le i,j\le n)$.
Therefore, $V_{ij}^\eta\to a_{ij}\delta$ in the sense of distributions as $\eta\to 0$,
with $\delta$ being the Dirac delta distribution.

The systematic derivation of cross-diffusion-type models, however, is a more recent development.
For instance, the hydrodynamic limit of the empirical densities of a two-component Brownian system was shown in \cite{Seo} to be governed by the Maxwell--Stefan equations.
In a related direction, a non-local Lotka--Volterra system featuring cross-diffusion was derived from particle dynamics in \cite{FM2015}, while the Shigesada--Kawasaki--Teramoto system was obtained in \cite{DDD} from a microscopic many-particle Markov process.
Extensions to systems involving more than two species remain scarce. For an arbitrary number of competing populations, the global existence of weak solutions to cross-diffusion systems was established in \cite{CxDJ}.
In the context of moderate interactions with logarithmic scaling, the mean-field limit for multiple population species was obtained in \cite{CDJ}.
Furthermore, the authors of \cite{CG2026} established well-posedness and propagation of chaos for a class of multi-species cross-diffusion equations, and showed their connection to interacting particle systems.
In \cite{CDHJ}, the authors derived cross-diffusion equations of Shigesada--Kawasaki--Teramoto type from stochastic moderately interacting many-particle systems for multiple species.
While the former work \cite{CDJ} established the propagation of chaos only in the weak sense, the goal of our paper is to further prove the propagation of chaos in the strong sense by means of the relative entropy method.

The relative entropy method has been widely employed as a powerful tool for deriving macroscopic equations from interacting particle systems, with a broad range of applications.
In 1991, energy-based estimates were employed in \cite{Oelschlager1991} to rigorously derive the continuity equation and the Euler equation as continuum limits of Hamiltonian many-particle systems.
The relative entropy method was employed in the context of hydrodynamic limits to derive the Stokes equations \cite{Varadhan} and the incompressible Euler equations \cite{S-R}.
Later, the relative entropy was used in \cite{JW2016} to derive the mean-field limit to the Vlasov equations, together with the propagation of chaos, through the strong convergence of all marginals.
Furthermore, \cite{JW2018} proved quantitative propagation of chaos via explicit relative-entropy estimates, requiring only $W^{-1,\infty}$ regularity on the interaction kernel.
By a modulated energy method, the authors of \cite{Serfaty2018,Serfaty2020} obtained the mean-field convergence of point systems with Coulomb-type interactions.
Moreover, \cite{BJW2019} proposed a modulated free energy, combining the methods of \cite{JW2018} and \cite{Serfaty2018,Serfaty2020}, that handles mixed singular potentials and yields quantitative derivations of chemotaxis models such as the Patlak--Keller--Segel system.
In a recent work, the author of \cite{Lacker2023} introduced a BBGKY-based framework  that provides a quantitative propagation of chaos for mean-field diffusive dynamics with optimal $O((k/N)^2)$ relative-entropy bounds on the $k$-particle marginals, improving the previous $O(k/N)$ rate.
The combination of relative entropy and Oelschl\"ager's regularized \(L^2\)-estimate \cite{Oelschlager1987} was employed in \cite{CHH} to derive propagation of chaos for the viscous porous medium equation in the moderate interaction regime.
The authors of \cite{CGH} generalized the relative-entropy convergence theory to the multi-species moderately interacting particle systems up to Newtonian singularity.
Up to our knowledge, quantitative estimates of strong propagation of chaos for multi-species cross-diffusion equations are new.

Using It\^o's inequality, the Kolmogorov forward equation of \eqref{micro1} is given by
\begin{equation}
\label{Kf}
\left\{
\begin{aligned}
\partial_t u_{\eta,n}^N-\sum_{i=1}^n\sum_{k=1}^N\sigma_i\Delta_{x^k_i} u^N_{\eta,n}
&=\sum_{i=1}^n\sum_{k=1}^N{\rm div} _{x^k_i}\Big(\sum_{j=1}^n \frac{1}{N}\sum_{l=1}^N\nabla V^\eta_{ij}(x^k_i-x^l_j)u^N_{\eta,n} \Big),\\
u^N_{\eta,n}(0)&=(u^0_1)^{\otimes N}\otimes \cdots\otimes (u^0_n)^{\otimes N},
\end{aligned}
\right.
\end{equation}
where $u^N_{\eta,n}=u^N_{\eta,n}(t,x_1^1,\ldots,x_1^N,\ldots,x_n^1,\ldots,x_n^N)$ is the joint law of the particles $[(X_{\eta,i}^{k,N})_{i=1}^n]_{k=1}^N$.
For fixed $\eta>0$, the global existence and uniqueness of a classical solution to the linear parabolic problem \eqref{Kf} with smooth coefficients can be obtained as a trivial consequence of classical parabolic theory.

As noted in \cite{CGH}, the definition of the marginal distribution for multi-species particle system requires special care,
since-unlike in the single-species setting-the function $u^N_{\eta,n}$ is not fully symmetric.
Let $r=(r_1,\ldots,r_n)$ with $r_i\in \N$ $(1\le i\le n)$.
For the Liouville equation \eqref{Kf}, the $r$-th marginal distribution across the $n$-species is defined as
\begin{align*}
u^{N,r}_{\eta,n}=\int_{\R^{d\sum_{i=1}^n(N-r_i)}}u^N_{\eta,n} dx_1^{r_1+1}\cdots dx_1^N \cdots dx_n^{r_n+1}\cdots dx_n^N,
\end{align*}
which characterizes the joint probability distribution of the particles
$$X^{1,N}_{\eta,1},\ldots,X^{r_1,N}_{\eta,1};\ldots ;X^{1,N}_{\eta,n},\ldots,X^{r_n,N}_{\eta,n}.$$

As a mean field approximation of \eqref{micro1}, we introduce the intermediate stochastic system, which can be formally viewed as a mean field limit $N\to\infty$ in the system \eqref{micro1} for fixed $\eta>0$,
\begin{equation}
\label{micro2}
\left\{
\begin{aligned}
d\bar{X}_{\eta,i}^{k}(t)&=-\sum_{j=1}^n(\nabla V^\eta_{ij}\ast u_{\eta,j})(\bar{X}^k_{\eta,i}(t),t)dt+\sqrt{2\sigma_i}dW^k_i(t),\\
\bar{X}^{k}_{\eta,i}(0)&=\zeta^k_i, \ \ \ \  1\le i\le n,\; 1\le k\le N,
\end{aligned}
\right.
\end{equation}
where $u_{\eta,j}$ denotes the probability density function of $\bar{X}^k_{\eta,j}$, which is governed by the following non-local differential equations
\begin{equation}
\label{macro2}
\left\{
\begin{aligned}
\partial_t u_{\eta,i}-\sigma_i\Delta u_{\eta,i}&={\rm div} \Big(\sum_{j=1}^n u_{\eta,i}\nabla V^\eta_{ij}\ast u_{\eta,j} \Big)\quad \mbox{in }\R^d,\;t>0,\\
u_{\eta,i}(0)&=u^0_i, \ \ \ \  1\le i\le n.
\end{aligned}
\right.
\end{equation}
Taking the limit $\eta\to 0$, we infer the limiting particle system
\begin{equation}
\label{micro3}
\left\{
\begin{aligned}
d\hat{X}_{i}^{k}(t)&=-\sum_{j=1}^n a_{ij}\nabla u_j (\hat{X}^k_{i}(t),t)dt+\sqrt{2\sigma_i}dW^k_i(t),\\
\hat{X}^{k}_{i}(0)&=\zeta^k_i, \ \ \ \  1\le i\le n,\; 1\le k\le N.
\end{aligned}
\right.
\end{equation}
Here the law of $\hat{X}^k_i$ solves the limiting cross-diffusion equations \eqref{macro1}.

Based on suitable regularity assumptions on the solutions to the PDE equations \eqref{macro1} and \eqref{macro2} and the error estimates between stochastic differential systems \eqref{micro1} and  \eqref{micro3}, the quantitative estimates for propagation of chaos derived in this paper is self-contained.
All these assumptions were rigorously proved in \cite{CDJ} under the parameter condition $\varepsilon\log N\geq \eta^{-2d-4}$,
for sufficiently small $\varepsilon>0$. 

\begin{assumption}\label{ass}
Let $s>d/2+2$ and $\sigma_{min}:=\min_{1\le i\le n}\sigma_i$.
Assume that the problems \eqref{macro1} and \eqref{macro2} possess unique global solutions $u$ and $u_\eta$, respectively, lying in the same function space
$L^\infty(0,T; H^s(\mathbb R^d))\cap L^2(0,T;H^{s+1}(\mathbb R^d))$.
Both $u$ and $u_\eta$ are componentwise nonnegative, satisfy
$\int_{\R^d}u_i(t,x) dx=\int_{\R^d}u_{\eta,i}(t,x) dx=1$ for any $t\in [0,T]$ and $1\le i\le n$,
and obey the common bound
\[
\|v\|_{L^\infty(0,T; H^s(\R^d))}+\|v\|_{L^2(0,T; H^{s+1}(\R^d))}\le \|v(0)\|_{H^s(\R^d)},
\]
with $v=u$ or $v=u_\eta$ {\rm(}where $v(0)=u^0$ in both cases{\rm)}.
Moreover, we require the error estimates between the cross-diffusion equations \eqref{macro1} and the non-local equations \eqref{macro2} as follows
\begin{align}\label{errorpde}
\|u_\eta-u\|_{L^\infty(0,T;L^2(\R^d))}
+\|\nabla (u_\eta-u)\|_{L^2(0,T;L^2(\R^d))}\le C(T,\sigma_{min})\eta.
\end{align}
We also assume the error estimates for the stochastic systems
\begin{align}\label{errorsto2}
\sup_{1\le k\le N}\mathbb{E}\Big[\sum_{i=1}^n \sup_{0<s<T}|X^{k,N}_{\eta,i}(s)-\bar{X}^k_{\eta,i}(s)| \Big]
\le CN^{-1+C\varepsilon},
\end{align}
and
\begin{align}\label{errorsto}
\sup_{1\le k\le N}\mathbb{E}\Big[\sum_{i=1}^n \sup_{0<s<T}|X^{k,N}_{\eta,i}(s)-\hat{X}^k_i(s)| \Big]
\le C\eta,
\end{align}
where the constant $C>0$ in \eqref{errorsto2} and $\eqref{errorsto}$ depends on $T$, $n$, $\|D^2 V_{ij}\|_{L^\infty(\R^d)}$, $\sigma_{min}$ and $u^0$.
\end{assumption}

Under the above assumptions, we state our main results, where we adopt the same logarithmic relation between the parameter $\eta$ and the number of particles $N$ as in \cite{CDJ}.
\begin{theorem}[Quantitative propagation of chaos result in $L^1(\R^{d|r|})$-norm]
	\label{relativeL1}
	Let $|r|:=\sum_{i=1}^n r_i$, and assume that $\int_{\mathbb{R}^d} u^0_i(x)|x|^2\,dx<\infty$ for each $1\le i\le n$.
       Under Assumption \ref{ass}, choose $\varepsilon>0$ sufficiently small {\rm(}depending on $n$, $T$, $\|\nabla V_{ij}\|_{W^{1,\infty}(\R^d)}$, $\sigma_{min}$, $u^0$ and r{\rm)} so that, whenever $\varepsilon\log N \ge \eta^{-2d-4}$, we have
	\begin{align*}
		\sup_{t\in [0,T]}\Big\|u^{N,r}_{\eta,n}-\prod_{i=1}^n u_{i}^{\otimes r_i}\Big\|_{L^1(\R^{d|r|})}\le C\eta^{\frac{4}{4+d}},
	\end{align*}
where $C>0$ depends on $n$, $T$, $\|\nabla V_{ij}\|_{W^{1,\infty}(\R^d)}$, $\sigma_{min}$, $u^0$ and $r$.
\end{theorem}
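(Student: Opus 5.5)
The plan is a triangle inequality through the intermediate tensorised density $\prod_{i=1}^n u_{\eta,i}^{\otimes r_i}$:
\begin{align*}
\Big\|u^{N,r}_{\eta,n}-\prod_{i=1}^n u_{i}^{\otimes r_i}\Big\|_{L^1}
\le \Big\|u^{N,r}_{\eta,n}-\prod_{i=1}^n u_{\eta,i}^{\otimes r_i}\Big\|_{L^1}
+\Big\|\prod_{i=1}^n u_{\eta,i}^{\otimes r_i}-\prod_{i=1}^n u_{i}^{\otimes r_i}\Big\|_{L^1}.
\end{align*}
The first term is handled by relative entropy at fixed $\eta$. It will turn out to be super-polynomially small in $\eta$ because of the logarithmic scaling. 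The second term carries the rate $\eta^{4/(4+d)}$. That rate comes from converting the $L^2$ estimate \eqref{errorpde} into an $L^1$ estimate using second moments.

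\textbf{Step 1 (entropy at the intermediate level).} Set $\bar u^N_\eta:=\prod_i u_{\eta,i}^{\otimes N}$, which solves the linear Fokker--Planck equation with drift $\nabla V^\eta_{ij}\ast u_{\eta,j}$. I compute $\frac{d}{dt}H(u^N_{\eta,n}\,|\,\bar u^N_\eta)$ from \eqref{Kf} and \eqref{macro2}. After Young's inequality absorbs the cross term into the Fisher information, this should give
\begin{align*}
\frac{d}{dt}H(u^N_{\eta,n}|\bar u^N_\eta)\le \frac{C}{\sigma_{min}}\sum_{i,k}\int u^N_{\eta,n}\Big|\sum_j\Big(\frac1N\sum_l\nabla V^\eta_{ij}(x_i^k-x_j^l)-\nabla V^\eta_{ij}\ast u_{\eta,j}(x_i^k)\Big)\Big|^2 .
\end{align*}
I then change measure to $\bar u^N_\eta$ using the Donsker--Varadhan inequality $\int f\phi\le \kappa^{-1}\big(H(f|g)+\log\int g e^{\kappa\phi}\big)$. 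The kernel $\nabla V^\eta_{ij}$ is bounded by $C\eta^{-d-1}$ and Lipschitz with constant $C\eta^{-d-2}$. Under the tensorised measure, the averages are sums of independent, centred, bounded variables (after removing the diagonal $l=k$, $j=i$ term, which costs $O(\eta^{-2d-2}/N)$). Hoeffding-type exponential bounds therefore give $\log\int \bar u^N_\eta e^{\kappa\phi}\le C$ for $\kappa\sim N\eta^{2d+2}$. Gronwall then yields $H(u^N_{\eta,n}|\bar u^N_\eta)\le Ce^{C\eta^{-2d-4}T}$, which is at most $CN^{C\varepsilon}$ when $\varepsilon\log N\ge\eta^{-2d-4}$. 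The initial entropy is zero.

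Next I need a multi-species version of marginal subadditivity, $H(u^{N,r}_{\eta,n}|\prod_i u_{\eta,i}^{\otimes r_i})\le C(r)N^{-1}H(u^N_{\eta,n}|\bar u^N_\eta)$. I would obtain it by grouping the particles of each species into blocks of size $r_i$ and using exchangeability within each species, as in \cite{CGH}. Pinsker's inequality then bounds the first term by $CN^{-(1-C\varepsilon)/2}$. Since $N\ge \exp(\eta^{-2d-4}/\varepsilon)$, this is far smaller than any power of $\eta$.

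\textbf{Step 2 (from $L^2$ to $L^1$).} Telescoping the tensor products gives
\begin{align*}
\Big\|\prod_i u_{\eta,i}^{\otimes r_i}-\prod_i u_i^{\otimes r_i}\Big\|_{L^1}\le \sum_i r_i\|u_{\eta,i}-u_i\|_{L^1(\R^d)},
\end{align*}
because every factor has unit mass. For each $i$ and each $R>0$, Cauchy--Schwarz on the ball and Chebyshev outside it give
\begin{align*}
\|u_{\eta,i}-u_i\|_{L^1}\le CR^{d/2}\|u_{\eta,i}-u_i\|_{L^2}+R^{-2}\int(u_{\eta,i}+u_i)|x|^2dx .
\end{align*}
Using \eqref{errorpde} and choosing $R=\eta^{-2/(d+4)}$ gives $C\eta^{4/(4+d)}$.

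This requires second moments of $u_\eta$ and $u$ bounded uniformly in $\eta$ and $t\in[0,T]$. Differentiating $\int|x|^2u_{\eta,i}$ gives $2d\sigma_i$ plus a drift term bounded by $\|\nabla V^\eta_{ij}\ast u_{\eta,j}\|_\infty\le a_{ij}\|\nabla u_{\eta,j}\|_\infty\le C\|u^0\|_{H^s}$, using $s>d/2+2$ and Assumption \ref{ass}. Gronwall together with the moment hypothesis on $u^0_i$ closes this estimate, and the same argument applies to $u$.

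\textbf{Main obstacle.} I expect the main obstacle to be the concentration estimate in Step 1 for the non-symmetric multi-species law, together with the accompanying subadditivity. This means controlling the exponential moment of the squared fluctuation with only the crude $\eta$-dependent bounds on $V^\eta$, while making sure the resulting $\eta^{-2d-4}$ growth is exactly what the logarithmic scaling absorbs.
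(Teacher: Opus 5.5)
Your proposal is correct up to one normalization slip (below). Its architecture matches the paper's: the triangle inequality through $\prod_i u_{\eta,i}^{\otimes r_i}$, marginal subadditivity plus Pinsker as in \eqref{formula}, and absorption of $N^{-1/2+C\varepsilon}$ by the logarithmic scaling. Your Step 2 is exactly Proposition \ref{relative2}: second moments by Gronwall, the ball/complement splitting, and $R=\eta^{-2/(4+d)}$.

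The genuine difference is Step 1. The paper never changes measure. It reads $\int u^N_{\eta,n}|\cdots|^2$ as an expectation along the true particles $X^{k,N}_{\eta,i}$ and couples them with the intermediate particles $\bar X^k_{\eta,i}$ (same Brownian motions and initial data). It then splits into $I_1+I_2+I_3$:
\begin{itemize}
\item $I_1$ and $I_3$ cost $\|D^2V^\eta_{ij}\|_\infty^2\sim\eta^{-2(d+2)}$ times the assumed coupling estimate \eqref{errorsto2};
\item $I_2$ is handled by a plain second-moment computation: the off-diagonal correlations $\mathbb{E}[Z^{k,l}_{i,j}Z^{k,m}_{i,j}]$ vanish, giving $I_2\le C\eta^{-2(d+1)}/N$.
\end{itemize}
The right-hand side contains no $\mathcal H$, so no Gronwall is needed. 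The exponent $2d+4$ in the scaling is what absorbs the $D^2V^\eta$ prefactor.

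Your Jabin--Wang route \cite{JW2018} stays at the level of the Liouville equation. It does not use \eqref{errorsto2} at all, nor the second-moment version of it that the paper's bounds on $I_1,I_3$ tacitly require (as stated, \eqref{errorsto2} controls only a first moment). It needs only $\|\nabla V^\eta_{ij}\|_\infty$. The price is an exponential-moment estimate under the tensorised law and a Gronwall factor that the logarithmic scaling must absorb.

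The slip: with your unnormalised $\phi=\sum_{i,k}|A_{ik}|^2$ you cannot take $\kappa\sim N\eta^{2d+2}$. Here $\int\bar u^N_\eta\,\phi=\sum_{i,k}\mathbb{E}|A_{ik}|^2$ is a sum of $nN$ terms of size about $\eta^{-d-2}/N$, so it does not decay in $N$. By Jensen, $\log\int\bar u^N_\eta e^{\kappa\phi}\ge\kappa\int\bar u^N_\eta\phi$, which then grows linearly in $N$.

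The fix is as follows. The $|A_{ik}|^2$ are not independent across $(i,k)$, so first use convexity,
$e^{\kappa\sum_{i,k}|A_{ik}|^2}\le\frac{1}{nN}\sum_{i,k}e^{nN\kappa|A_{ik}|^2}$.
Then apply Hoeffding to a single $A_{ik}$, conditionally on $x_i^k$: the off-diagonal summands are independent, centred and bounded by $C\eta^{-d-1}/N$. This forces $\kappa\sim\eta^{2d+2}/n^2$, independent of $N$. That choice is the one consistent with the exponential-in-$\eta^{-1}$ Gronwall factor you state; you in fact get $e^{C\eta^{-2d-2}T}$. With it the argument closes as you describe.
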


Based on Theorem \ref{relativeL1} and the $L^p$-estimates for $u^{N,r}_{\eta,n}$, we can further obtain the propagation of chaos result in $L^q(\R^{d|r|})$-norm.
\begin{theorem}[Quantitative propagation of chaos result in $L^q(\R^{d|r|})$-norm]\label{relativeLq}
Let $|r|:=\sum_{i=1}^n r_i$. Assume that, for each $1\le i\le n$, the initial data satisfy $u_i^0\in L^p(\mathbb{R}^d)$ with $2\le p<\infty$ and $\int_{\mathbb{R}^d} u^0_i(x)|x|^2\,dx<\infty$.
       Under Assumption \ref{ass}, choose $\varepsilon>0$ sufficiently small {\rm(}depending on $n$, $T$, $\|\nabla V_{ij}\|_{W^{1,\infty}(\R^d)}$, $\sigma_{min}$, $u^0$ and r{\rm)} so that, whenever $\varepsilon\log N \ge \eta^{-2d-4}$, it holds
\begin{align*}
\sup_{t\in [0,T]}\Big\|u^{N,r}_{\eta,n}-\prod_{i=1}^n u_i^{\otimes r_i}\Big\|_{L^q(\R^{d|r|})}\le C\eta^{\frac{2}{q}} \quad \mbox{for any } q\ge 2,
\end{align*}
and
\begin{align*}
\sup_{t\in [0,T]}\Big\|u^{N,r}_{\eta,n}-\prod_{i=1}^n u^{\otimes r_i}_i\Big\|_{L^q(\R^{d|r|})}\le C\eta^{\frac{2(q-1)}{q}}\quad \mbox{for any } 1<q<2,
\end{align*}
where $C>0$ depends on $n$, $T$, $\|\nabla V_{ij}\|_{W^{1,\infty}(\R^d)}$, $\sigma_{min}$, $u^0$ and $r$.
\end{theorem}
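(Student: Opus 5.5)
The strategy is to reduce both claims to a single $L^2$ estimate of order $\eta$, plus uniform $L^1$ and $L^p$ bounds, and then interpolate. Write
\[
u^{N,r}_{\eta,n}-\prod_{i=1}^n u_i^{\otimes r_i}
=\Big(u^{N,r}_{\eta,n}-\prod_{i=1}^n u_{\eta,i}^{\otimes r_i}\Big)+\Big(\prod_{i=1}^n u_{\eta,i}^{\otimes r_i}-\prod_{i=1}^n u_i^{\otimes r_i}\Big)=:I_1+I_2 .
\]
This is the same two-level splitting used in the proof of Theorem \ref{relativeL1}.

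\textbf{Step 1: the tensorised PDE term $I_2$.} Telescope the product of tensors, replacing one factor $u_{\eta,i}$ by $u_i$ at a time. In each resulting term, the factor $u_{\eta,i}-u_i$ is estimated by \eqref{errorpde} in $L^\infty(0,T;L^2)$. The remaining factors are bounded in $L^2(\R^d)$, uniformly in $\eta$, by the $H^s$ bound of Assumption \ref{ass}. This gives $\sup_t\|I_2\|_{L^2(\R^{d|r|})}\le C\eta$. Since $s>d/2$, we have $u,u_\eta\in L^\infty(0,T;L^1\cap L^\infty)$, so $I_2$ is also bounded in every $L^p$, uniformly in $\eta$.

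\textbf{Step 2: the particle-level term $I_1$.} Here I use two ingredients.

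First, the $L^p$ bound on marginals. I will use (proving it separately if it is not available earlier) an estimate
\[
\sup_{t}\|u^{N,r}_{\eta,n}(t)\|_{L^p(\R^{d|r|})}\le C
\]
for $2\le p<\infty$, derived from $u_i^0\in L^p$ and an $L^p$-energy estimate on \eqref{Kf}. The divergence of the drift produces factors of $\|\Delta V^\eta_{ij}\|_{L^\infty}\lesssim\eta^{-d-2}$, and these are absorbed using the logarithmic scaling $\varepsilon\log N\ge \eta^{-2d-4}$. Consequently $\|I_1\|_{L^p}\le C$, since $u_\eta$ is also bounded in $L^p$.

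Second, the $L^1$ bound on $I_1$. This comes from the first step of the proof of Theorem \ref{relativeL1}: the relative entropy between $u^N_{\eta,n}$ and the tensorised intermediate solution is bounded by a quantity of the form $C\,e^{C\eta^{-d-2}}N^{-1}$ (or $N^{-1+C\varepsilon}$). Using subadditivity of entropy over marginals and Pinsker's inequality, $\|I_1\|_{L^1}$ is bounded by the square root of this.

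Under $\varepsilon\log N\ge\eta^{-2d-4}$ we have $N^{-1/2}\le \exp(-\eta^{-2d-4}/(2\varepsilon))$, which decays faster than any power of $\eta$ and dominates the $e^{C\eta^{-d-2}}$ growth. Interpolating, $\|I_1\|_{L^2}\le\|I_1\|_{L^1}^{\theta}\|I_1\|_{L^p}^{1-\theta}$ with $\theta\in(0,1)$ determined by $p$, so $\|I_1\|_{L^2}\le C\eta$ (indeed $o(\eta^m)$ for every $m$). Combined with Step 1, this gives
\[
\sup_t\Big\|u^{N,r}_{\eta,n}-\prod_i u_i^{\otimes r_i}\Big\|_{L^2}\le C\eta .
\]

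\textbf{Step 3: interpolation.} Let $f$ denote the difference. Its $L^1$ norm is at most $2$, since it is a difference of probability densities, and its $L^p$ norm is bounded by Steps 1–2.

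For $1<q<2$, use $\|f\|_{L^q}\le\|f\|_{L^1}^{(2-q)/q}\|f\|_{L^2}^{2(q-1)/q}\le C\eta^{2(q-1)/q}$.

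For $q\ge 2$, choose $p>q$ and interpolate between $L^2$ and $L^p$. This gives $\eta^{2(p-q)/(q(p-2))}$, which tends to the claimed $\eta^{2/q}$ as $p\to\infty$. To obtain exactly $\eta^{2/q}$, one should use an $L^\infty$-type bound, or more precisely track the $p$-dependence of the constants in the $L^p$ estimate. Alternatively, since the $I_1$ part is superpolynomially small, it suffices to handle $I_2$ in $L^q$ directly: apply \eqref{errorpde} together with the $H^s\subset L^\infty$ bound and $\|g\|_{L^q}\le\|g\|_{L^2}^{2/q}\|g\|_{L^\infty}^{1-2/q}$ to each telescoped factor $u_{\eta,i}-u_i$. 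This yields $\eta^{2/q}$ directly.

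\textbf{Main obstacle.} The hardest step is the uniform $L^p$ estimate for the marginals $u^{N,r}_{\eta,n}$, in particular controlling the $\eta$-dependent exponential growth from $\Delta V^\eta$ so that it is beaten by the superpolynomial smallness coming from $N$. I would first try a direct $L^p$ energy estimate on \eqref{Kf} and then integrate out the remaining variables, which is legitimate by Minkowski's inequality for marginals.
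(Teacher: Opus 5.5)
\textbf{Gap: the $L^p$ bound on the marginal.} Your decomposition matches the paper, as do the use of Proposition \ref{relative1} for $\|I_1\|_{L^1}$ and the factor-wise interpolation of $u_{\eta,i}-u_i$ between $L^2$ and $L^\infty$ for $q\ge2$. The problem is the ingredient you yourself flag as the main obstacle: the $L^p$ bound on $u^{N,r}_{\eta,n}$ cannot be obtained the way you propose.

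Let $b$ denote the $dNn$-dimensional drift in \eqref{Kf}. An $L^p$ energy estimate on the full Liouville equation gives
\[
\frac{d}{dt}\|u^N_{\eta,n}\|^p_{L^p(\R^{dNn})}\le (p-1)\|{\rm div}\, b\|_{L^\infty}\|u^N_{\eta,n}\|^p_{L^p(\R^{dNn})}.
\]
Here ${\rm div}\, b$ is a sum over all $Nn$ particles and is of size $Nn^2\|\Delta V^\eta\|_{L^\infty}\sim N\eta^{-d-2}$. The Gronwall factor $\exp(C(p-1)N\eta^{-d-2}T)$ is therefore exponential in $N$, and no factor $N^{-1/2+C\varepsilon}$ can beat it. The initial value $\prod_i\|u^0_i\|_{L^p}^{pN}$ is also exponential in $N$.

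More fundamentally, integrating out variables does not transfer $L^p$ bounds. Minkowski's integral inequality only gives $\|\int f\,dx''\|_{L^p(dx')}\le\int\|f(\cdot,x'')\|_{L^p(dx')}\,dx''$, and this is not controlled by $\|f\|_{L^p(dx'dx'')}$ on an unbounded domain. For example, take $f=g\otimes h$ with $h$ a widely spread probability density: then $\|f\|_{L^p}=\|g\|_{L^p}\|h\|_{L^p}\to0$, while the marginal remains $g$.

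\textbf{The missing idea.} Estimate the marginal from its own unclosed equation \eqref{macro5}, which is obtained by integrating \eqref{Kf} over the discarded variables; this is what the paper does. The flux $F^k_i=\int\sum_j\frac1N\sum_l\nabla V^\eta_{ij}(x^k_i-x^l_j)\,u^N_{\eta,n}\,dx''$ still involves the full law. However, since $u^N_{\eta,n}\ge0$, it obeys the pointwise bound
\[
|F^k_i|\le n\max_{i,j}\|\nabla V^\eta_{ij}\|_{L^\infty}\,u^{N,r}_{\eta,n}.
\]
Test with $p(u^{N,r}_{\eta,n})^{p-1}$ and absorb the drift term into the dissipation by Young's inequality. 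This gives
\[
\frac{d}{dt}\|u^{N,r}_{\eta,n}\|^p_{L^p}\le C\eta^{-2(d+1)}\|u^{N,r}_{\eta,n}\|^p_{L^p},
\]
with $C$ depending on $|r|$, $p$, $n$, $\sigma_{\min}$ but not on $N$. Hence $\sup_t\|u^{N,r}_{\eta,n}\|^p_{L^p}\le Ce^{CT\eta^{-2(d+1)}}\le CN^{CT\varepsilon}$.

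This is not the uniform bound you assert, so your claim $\|I_1\|_{L^p}\le C$ is false as stated. But the growth is only a small power of $N$. Interpolating against $\|I_1\|_{L^1}\le CN^{-1/2+C\varepsilon}$ absorbs it once $\varepsilon$ is small, and gives $\|I_1\|_{L^q}\le C\eta$.

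\textbf{The rest of your argument.} With this estimate in place, your proof closes. For $q\ge2$, your second alternative is exactly the paper's argument. For $1<q<2$, you interpolate the whole difference between $\|f\|_{L^1}\le2$ and $\|f\|_{L^2}\le C\eta$. This is a valid and slightly more economical alternative to the paper, which treats $I_1$ via $L^1$--$L^{2q}$ and $I_2$ via factor-wise $L^1$--$L^2$. You need $p>\max\{2,q\}$ for a non-degenerate interpolation. This is available because $u^0_i\in H^s\subset L^1\cap L^\infty$ under Assumption \ref{ass}.
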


The proof is divided into two steps.
 In the first step, we deduce a mean-field type estimate between the joint distribution of the particle system \eqref{micro1} and
the tensorised solution $u_\eta^{\otimes Nn}:=\prod_{i=1}^n u_{\eta,i}^{\otimes N}$ of the intermediate PDE equations \eqref{macro2}.
A simple computation gives
\begin{equation}
\label{relativeueta}
\left\{
\begin{aligned}
\partial_t u_\eta^{\otimes Nn}-\sum_{i=1}^n \sum_{k=1}^N\sigma_i\Delta_{x^k_i}u_\eta^{\otimes Nn}
&=\sum_{i=1}^n\sum_{k=1}^N{\rm div}_{x^k_i}\Big(\sum_{j=1}^n\nabla V^\eta_{ij}\ast u_{\eta,j}(x_i^k)u_\eta^{\otimes Nn} \Big),  \\
u_\eta^{\otimes Nn}(0)&=(u_1^0)^{\otimes N}\otimes \cdots \otimes (u_n^0)^{\otimes N}.
\end{aligned}
\right.
\end{equation}
The main tool used in this step is the relative entropy, whose definition is given as follows.
For $\ell\in\N$ and two probability density functions $f$ and $g$ on $\R^{d\ell}$, let
\begin{align*}
\mathcal{H}(f|g)
:=\int_{\R^{d\ell}}f\log\frac{f}{g}\,dx_1\cdots dx_\ell.
\end{align*}
The $L^1$-distance can be controlled by the relative entropy
\begin{align}\label{formula}
\frac{1}{2}\Big\|u^{N,r}_{\eta,n}-\prod_{i=1}^n u_{\eta,i}^{\otimes r_i}\Big\|_{L^1(\R^{d|r|})}^2
\le \mathcal{H}\Big(u^{N,r}_{\eta,n}\big|\prod_{i=1}^n u_{\eta,i}^{\otimes r_i} \Big)
\le \frac{\max_{1\le i\le n} r_i}{N}\mathcal{H}\Big(u^{N}_{\eta,n}\big| u_{\eta}^{\otimes Nn} \Big).
\end{align}
The proof of \eqref{formula} follows from the Csisz\'ar-Kullback-Pinsker inequality and the definition of marginal distribution (cf. \cite[Lemma 2.2]{CGH} and \cite[Lemma 3.9]{MM}); we therefore omit the details for brevity.
Actually, we show in the second step, a mean-field type estimate between the tensorised solution of the PDE at the intermediate level and that one at the limiting level
is obtained using the error estimates \eqref{errorpde}.
The proof of Theorem \ref{relativeL1} is finished by combining the first and second steps.

Using the quantitative propagation of chaos in $L^1$-norm together with the $L^p$-estimates for $u^{N,r}_{\eta,n}$,
we further conclude the corresponding result in the $L^q$-norm by interpolation.

This paper is organized as follows. Section 2 is devoted to the proof of Theorem \ref{relativeL1}.
In Section 3, we complete the proof of Theorem \ref{relativeLq}.

\section{Proof of Theorem \ref{relativeL1}}
To prove Theorem \ref{relativeL1}, we start with the estimate between the $r$-th marginal distribution $u^{N,r}_{\eta,n}$ of the particle system \eqref{micro1} and
the tensorised solution $\prod_{i=1}^n u_{\eta,i}^{\otimes r_i}$ of the intermediate PDE equations \eqref{macro2}, which is presented as follows.
\begin{proposition}\label{relative1}
Under Assumption \ref{ass}, choose $\varepsilon>0$ sufficiently small {\rm(}depending on $n$, $T$, $\|\nabla V_{ij}\|_{W^{1,\infty}(\R^d)}$, $\sigma_{min}$, $u^0$ and r{\rm)} so that, whenever $\varepsilon\log N \ge \eta^{-2d-4}$, it holds
\begin{align*}
\sup_{t\in [0,T]}\Big\|u^{N,r}_{\eta,n}-\prod_{i=1}^n u_{\eta,i}^{\otimes r_i}\Big\|_{L^1(\R^{d|r|})}^2\le CN^{-1+C\varepsilon},
\end{align*}
where $C$ is a positive constant depends on $n$, $T$, $\|\nabla V_{ij}\|_{W^{1,\infty}(\R^d)}$, $\sigma_{min}$, $u^0$ and r.
\end{proposition}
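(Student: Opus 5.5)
The plan is to reduce the statement, via \eqref{formula}, to a bound on the full relative entropy $\mathcal{H}(t):=\mathcal{H}\big(u^N_{\eta,n}(t)\,|\,u_\eta^{\otimes Nn}(t)\big)$. It suffices to show
\[
\sup_{t\in[0,T]}\mathcal{H}(t)\le CN^{C\varepsilon},
\]
since then \eqref{formula} gives $\|u^{N,r}_{\eta,n}-\prod_i u_{\eta,i}^{\otimes r_i}\|_{L^1}^2\le 2\max_i r_i\,N^{-1}\mathcal{H}\le CN^{-1+C\varepsilon}$. Note that $\mathcal{H}(0)=0$, because both densities start from $(u_1^0)^{\otimes N}\otimes\cdots\otimes(u_n^0)^{\otimes N}$.

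\textbf{Step 1: entropy identity.} I would differentiate $\mathcal{H}(t)$ in time, using \eqref{Kf} for $u^N_{\eta,n}$ and \eqref{relativeueta} for $u_\eta^{\otimes Nn}$. Abbreviate the two drifts as
\[
b^{N}_{i,k}(x)=\sum_j\frac1N\sum_l\nabla V^\eta_{ij}(x_i^k-x_j^l),\qquad \bar b_{i,k}(x)=\sum_j\nabla V^\eta_{ij}\ast u_{\eta,j}(x_i^k).
\]
The standard computation gives
\[
\frac{d}{dt}\mathcal{H}=-\sum_{i,k}\sigma_i\int u^N_{\eta,n}\Big|\nabla_{x_i^k}\log\frac{u^N_{\eta,n}}{u_\eta^{\otimes Nn}}\Big|^2-\sum_{i,k}\int u^N_{\eta,n}\,(b^N_{i,k}-\bar b_{i,k})\cdot\nabla_{x_i^k}\log\frac{u^N_{\eta,n}}{u_\eta^{\otimes Nn}}.
\]
Young's inequality absorbs the second term into the dissipation, leaving
\[
\frac{d}{dt}\mathcal{H}\le\frac{1}{4\sigma_{min}}\sum_{i,k}\mathbb{E}\big|b^N_{i,k}(X_\eta(t))-\bar b_{i,k}(X_\eta(t))\big|^2 ,
\]
where $X_\eta(t)$ is the particle configuration of \eqref{micro1}, whose law is $u^N_{\eta,n}$. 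The integrations by parts are justified because for fixed $\eta$ the coefficients are smooth and bounded; the second-moment assumption on $u^0_i$ and the $H^s$ bounds of Assumption \ref{ass} make the entropy finite and the boundary terms vanish.

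\textbf{Step 2: splitting the drift error.} Evaluating at the particles $X$ (shorthand for $X_\eta(t)$ from Step 1) and inserting the intermediate particles $\bar X$ of \eqref{micro2}, I would split
\[
b^N(X)-\bar b(X)=\big[b^N(X)-b^N(\bar X)\big]+\big[b^N(\bar X)-\bar b(\bar X)\big]+\big[\bar b(\bar X)-\bar b(X)\big].
\]
\emph{Lipschitz terms (first and third).} Since $\|D^2V^\eta_{ij}\|_\infty\le C\eta^{-d-2}$ and all drifts are bounded by $C\eta^{-d-1}$, I would use $|a|^2\le\|a\|_\infty|a|$ to get a bound by
\[
C\eta^{-2d-3}\,\mathbb{E}\big[\,|X^k_i-\bar X^k_i|+\tfrac1N\textstyle\sum_{j,l}|X^l_j-\bar X^l_j|\,\big].
\]
By \eqref{errorsto2} this is at most $C\eta^{-2d-3}N^{-1+C\varepsilon}$; using only the first moment here is the reason for this trick.

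\emph{Law of large numbers term (middle).} The $\bar X^l_j$ are independent with laws $u_{\eta,j}$. After removing the diagonal term $l=k$, $j=i$, which costs $O(\eta^{-d-1}/N)$, the remaining sum has mean zero conditionally on $\bar X^k_i$. Its variance is therefore at most $C\eta^{-2d-2}/N$.

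\textbf{Step 3: closing.} Summing over the $nN$ pairs $(i,k)$ gives
\[
\frac{d}{dt}\mathcal{H}\le C\big(\eta^{-2d-3}N^{C\varepsilon}+\eta^{-2d-2}\big).
\]
The scaling $\varepsilon\log N\ge\eta^{-2d-4}$ yields $\eta^{-2d-3}\le\eta^{-2d-4}\le\varepsilon\log N\le CN^{\varepsilon}$. Integrating in time then gives $\mathcal{H}(t)\le C T N^{C\varepsilon}$, and the statement follows from \eqref{formula}.

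\textbf{Main obstacle.} I expect the Lipschitz terms in Step 2 to be the main obstacle. They require converting the pathwise coupling estimate \eqref{errorsto2} into a bound on $\mathbb{E}|b^N(X)-b^N(\bar X)|^2$ while tracking the powers of $\eta$ so that they are swallowed by the logarithmic scaling. A secondary point is the rigorous justification of the entropy identity in Step 1.
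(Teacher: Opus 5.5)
Your proposal is correct and follows essentially the same route as the paper. The shared steps are: the entropy identity from \eqref{Kf} and \eqref{relativeueta}, Young's inequality to absorb the cross term into the dissipation, and the same three-term coupling split into two Lipschitz terms controlled via \eqref{errorsto2} and a law-of-large-numbers term of size $\eta^{-2(d+1)}/N$, closed by the logarithmic scaling and \eqref{formula}. The one refinement is your bound $|a|^2\le\|a\|_\infty|a|$ in the Lipschitz terms, which uses only the first-moment estimate actually stated in \eqref{errorsto2}; the paper instead bounds these terms through $\mathbb{E}|X^{k,N}_{\eta,i}-\bar X^k_{\eta,i}|^2$ and cites \eqref{errorsto2} without justifying the passage from first to second moments.
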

\begin{proof}
We deduce from \eqref{Kf} and \eqref{relativeueta} that
\begin{align*}
&\frac{d}{dt}\frac{1}{N}\mathcal{H}(u^N_{\eta,n}|u_{\eta}^{\otimes Nn})\nonumber\\
=&\frac{1}{N}\int_{\R^{dNn}}\sum_{i=1}^n\sum_{k=1}^N\Big(-\sigma_i\nabla _{x^k_i}u^N_{\eta,n}-\frac{1}{N}\sum_{j=1}^n\sum_{l=1}^N\nabla V^\eta_{ij}(x_i^k-x_j^l)u^N_{\eta,n} \Big)\cdot
\nabla _{x^k_i}\log \frac{u^N_{\eta,n}}{u_\eta^{\otimes Nn}} dx_1^1\cdots dx_n^N\nonumber\\
&-\frac{1}{N}\int_{\R^{dNn}}\sum_{i=1}^n\sum_{k=1}^N\Big(-\sigma_i\nabla_{x^k_i}u_\eta^{\otimes Nn}-\sum_{j=1}^n\nabla V^\eta_{ij}\ast u_{\eta,j}(x^k_i)u_\eta^{\otimes Nn} \Big)
\cdot \nabla _{x^k_i}\frac{u^N_{\eta,n}}{u_\eta^{\otimes Nn}} dx_1^1\cdots dx_n^N\nonumber\\
=&\frac{1}{N}\int_{\R^{dNn}}\sum_{i=1}^n\sum_{k=1}^N\sigma_i \Big(-\nabla _{x^k_i}u^N_{\eta,n}+\frac{u^N_{\eta,n}}{u_\eta^{\otimes Nn}}\nabla_{x^k_i}u_\eta^{\otimes Nn} \Big)
\cdot \nabla_{x^k_i}\log \frac{u^N_{\eta,n}}{u_\eta^{\otimes Nn}}dx_1^1\cdots dx_n^N\nonumber\\
&-\frac{1}{N}\int_{\R^{dNn}}\sum_{i=1}^n\sum_{k=1}^N \sum_{j=1}^n\Big(\frac{1}{N}\sum_{l=1}^N \nabla V^{\eta}_{ij}(x^k_i-x^l_j)
-\nabla V^\eta_{ij}\ast u_{\eta,j}(x_i^k) \Big)u^N_{\eta,n}\cdot \nabla_{x^k_i}\log \frac{u^N_{\eta,n}}{u_\eta^{\otimes Nn}} dx_1^1\cdots dx_n^N\\
=&-\frac{1}{N}\int_{\R^{dNn}}\sum_{i=1}^n\sum_{k=1}^N\sigma_i \Big|\nabla_{x^k_i}\log \frac{u^N_{\eta,n}}{u_\eta^{\otimes Nn}} \Big|^2 u^N_{\eta,n}
dx_1^1\cdots dx_n^N\nonumber\\
&-\frac{1}{N}\int_{\R^{dNn}}\sum_{i=1}^n\sum_{k=1}^N \sum_{j=1}^n\Big(\frac{1}{N}\sum_{l=1}^N \nabla V^{\eta}_{ij}(x^k_i-x^l_j)
-\nabla V^\eta_{ij}\ast u_{\eta,j}(x_i^k) \Big)u^N_{\eta,n}\cdot \nabla_{x^k_i}\log \frac{u^N_{\eta,n}}{u_\eta^{\otimes Nn}} dx_1^1\cdots dx_n^N.
\end{align*}
By Young's inequality, the last term on the right-hand side of the above equality can be divided into several parts.
\begin{align*}
&\frac{d}{dt}\frac{1}{N}\mathcal{H}(u^N_{\eta,n}|u_{\eta}^{\otimes Nn})\nonumber\\
\leq& -\frac{1}{2N}\int_{\R^{dNn}}\sum_{i=1}^n\sum_{k=1}^N \sigma_i \Big|\nabla_{x^k_i}\log \frac{u^N_{\eta,n}}{u_\eta^{\otimes Nn}} \Big|^2u^N_{\eta,n} dx_1^1\cdots dx_n^N\nonumber\\
&+C\mathbb{E}\Big[\frac{1}{N}\sum_{i=1}^n\sum_{k=1}^N\Big|\frac{1}{N}\sum_{j=1}^n\sum_{l=1}^N \nabla V^\eta_{ij}(X^{k,N}_{\eta,i}-X^{l,N}_{\eta,j})
-\frac{1}{N}\sum_{j=1}^n\sum_{l=1}^N\nabla V^\eta_{ij}(\bar{X}^k_{\eta,i}-\bar{X}^l_{\eta,j})\Big|^2 \Big]\nonumber\\
&+C\mathbb{E}\Big[\frac{1}{N}\sum_{i=1}^n\sum_{k=1}^N\Big|\frac{1}{N}\sum_{j=1}^n\sum_{l=1}^N\nabla V^\eta_{ij}(\bar{X}^k_{\eta,i}-\bar{X}^l_{\eta,j})
-\sum_{j=1}^n\nabla V^\eta_{ij}\ast u_{\eta,j}(\bar{X}^k_{\eta,i})\Big|^2 \Big]\nonumber\\
&+C\mathbb{E}\Big[\frac{1}{N}\sum_{i=1}^n\sum_{k=1}^N\Big|\sum_{j=1}^n\nabla V^\eta_{ij}\ast u_{\eta,j}(\bar{X}^k_{\eta,i})
-\sum_{j=1}^n\nabla V^\eta_{ij}\ast u_{\eta,j}(X^{k,N}_{\eta,i})\Big|^2 \Big]\nonumber\\
=:& -\frac{1}{2N}\int_{\R^{dNn}}\sum_{i=1}^n\sum_{k=1}^N \sigma_i \Big|\nabla_{x^k_i}\log \frac{u^N_{\eta,n}}{u_\eta^{\otimes Nn}} \Big|^2 u^N_{\eta,n} dx_1^1\cdots dx_n^N
+I_1+I_2+I_3.
\end{align*}
Using mean value theorem, H\"older's inequality and Jensen's inequality, the term $I_1$ can be estimated as follows.
\begin{align*}
I_1\le& \frac{C}{N}\sup_{1\le i,j\le n}\|D^2V^\eta_{ij}\|^2_{L^\infty(\R^d)}\sum_{k=1}^N
\mathbb{E}\Big[\sum_{i=1}^n\Big||X^{k,N}_{\eta,i}-\bar{X}^k_{\eta,i}|+\frac{1}{N}\sum_{j=1}^n\sum_{l=1}^N|X^{l,N}_{\eta,j}-\bar{X}^l_{\eta,j}| \Big|^2 \Big]\nonumber\\
\le& C\eta^{-2(d+2)}\|D^2V_{ij}\|^2_{L^\infty(\R^d)}\Big(\frac{1}{N}\sum_{k=1}^N\mathbb{E}\Big[\sum_{i=1}^n|X^{k,N}_{\eta,i}-\bar{X}^k_{\eta,i}|^2 \Big]
+\mathbb{E}\Big[\Big|\frac{1}{N}\sum_{j=1}^n\sum_{l=1}^N |X^{l,N}_{\eta,j}-\bar{X}^l_{\eta,j}| \Big|^2 \Big]\Big)\nonumber\\
\le& \frac{C}{N}\eta^{-2(d+2)}\sum_{k=1}^N\mathbb{E}\Big[\sum_{i=1}^n |X_{\eta,i}^{k,N}-\bar{X}^k_{\eta,i}|^2 \Big]
\le C\eta^{-2(d+2)}\sup_{1\le k\le N}\mathbb{E}\Big[\sum_{i=1}^n |X_{\eta,i}^{k,N}-\bar{X}^k_{\eta,i}|^2 \Big],
\end{align*}
together with \eqref{errorsto2}, we infer that
\begin{align}\label{I1}
I_1\le C\eta^{-2(d+2)}N^{-1+C\varepsilon}.
\end{align}
It follows from Young's convolution inequality and the $L^\infty(0,T;L^1(\R^d))$ bound for $u_{\eta,j}$ $(1\le j\le n)$ that
\begin{align*}
\|D^2V^{\eta}_{ij}\ast u_{\eta,j}\|_{L^\infty((0,T)\times\R^d)}
\le C\|D^2 V^{\eta}_{ij}\|_{L^\infty(\R^d)}\|u_{\eta,j}\|_{L^\infty(0,T;L^1(\R^d))}\le C\eta^{-(d+2)},
\end{align*}
which implies that
\begin{align}
I_3\le& \frac{C}{N}\sup_{1\le i,j\le n}\|D^2V^{\eta}_{ij}\ast u_{\eta,j}\|^2_{L^\infty((0,T)\times\R^d)}
\sum_{k=1}^N\mathbb{E}\Big[\sum_{i=1}^n |X^{k,N}_{\eta,i}-\bar{X}^k_{\eta,i}|^2 \Big]\nonumber\\
\le& C\eta^{-2(d+2)}\sup_{1\le k\le N}\mathbb{E}\Big[\sum_{i=1}^n|X^{k,N}_{\eta,i}-\bar{X}^k_{\eta,i}|^2  \Big]
\le C\eta^{-2(d+2)}N^{-1+C\varepsilon}.
\end{align}
To deal with the term $I_2$, we introduce the following notation.
\begin{align*}
Z^{k,l}_{i,j}(s):=\nabla V^\eta_{ij}(\bar{X}^k_{\eta,i}-\bar{X}^l_{\eta,j})-\nabla V^\eta_{ij}\ast u_{\eta,j}(\bar{X}^k_{\eta,i}).
\end{align*}
Then, it holds
\begin{align*}
\Big|\frac{1}{N}\sum_{j=1}^n\sum_{l=1}^N Z^{k,l}_{i,j}(s) \Big|^2
\le \frac{C}{N^2}\sum_{j=1}^n \Big(\sum_{l,m=1}^N Z^{k,l}_{i,j}(s)Z^{k,m}_{i,j}(s) \Big).
\end{align*}
We start with the case $l\neq m$ and $i\neq j$.
\begin{align*}
\mathbb{E}[Z^{k,l}_{i,j}(s)Z^{k,m}_{i,j}(s)]=
&\int_{\R^d}\int_{\R^d}\int_{\R^d} \nabla V^\eta_{ij}(z-x)\nabla V^\eta_{ij}(z-y)u_{\eta,i}(z)u_{\eta,j}(x) u_{\eta,j}(y) dxdz dy\\
&+\int_{\R^d}\Big(\int_{\R^d}\nabla V^\eta_{ij}(z-x)u_{\eta,j}(x) dx \Big)\Big(\int_{\R^d}\nabla V^\eta_{ij}(z-y)u_{\eta,j}(y) dy \Big)u_{\eta,i}(z)dz\\
&-2\int_{\R^d}\int_{\R^d}\nabla V^\eta_{ij}(z-x)\nabla V^\eta_{ij}\ast u_{\eta,j}(z)u_{\eta,i}(z)u_{\eta,j}(x)dxdz=0.
\end{align*}
A similar argument can be applied to deduce that, for the case $l\neq m$, $k\neq l$, $k\neq m$ and $i=j$,
$\mathbb{E}[Z^{k,l}_{i,i}(s)Z^{k,m}_{i,i}(s)]=0$ as well.
Now we turn to the case $l\neq m$, $i=j$ and $k=l$.
\begin{align*}
\mathbb{E}[Z^{k,k}_{i,i}(s)Z^{k,m}_{i,i}(s)]=
&\mathbb{E}\big[\big(\nabla V^\eta_{ii}(0)-\nabla V^\eta_{ii}\ast u_{\eta,i}(\bar{X}^k_{\eta,i}(s),s)\big)\\
&\cdot\big(\nabla V^\eta_{ii}(\bar{X}^k_{\eta,i}(s)-\bar{X}^m_{\eta,i}(s))-\nabla V^\eta_{ii}\ast u_{\eta,i}(\bar{X}^k_{\eta,i}(s),s)  \big) \big]=0.
\end{align*}
In a similar way, we show that $\mathbb{E}[Z^{k,l}_{i,i}(s)Z^{k,k}_{i,i}(s)]=0$, for $l\neq m$, $i=j$ and $k=m$.
Therefore, we have
\begin{align*}
\Big|\frac{1}{N}\sum_{j=1}^n\sum_{l=1}^N Z^{k,l}_{i,j}(s) \Big|^2
\le \frac{C}{N^2}\sum_{j=1}^n\sum_{l=1}^N |Z^{k,l}_{i,j}(s)|^2,
\end{align*}
this shows that
\begin{align}\label{I3}
I_2=& \frac{C}{N}\sum_{i=1}^n\sum_{k=1}^N\mathbb{E}\Big[\Big|\frac{1}{N}\sum_{j=1}^n\sum_{l=1}^N Z^{k,l}_{i,j}(s) \Big|^2 \Big]
\le \frac{C}{N}\sum_{i=1}^n\sum_{k=1}^N \mathbb{E}\Big[\frac{1}{N^2}\sum_{j=1}^n\sum_{l=1}^N |Z^{k,l}_{i,j}(s)|^2 \Big]\nonumber\\
\le& \frac{C}{N}\sup_{1\le i,j\le n}(\|\nabla V^{\eta}_{ij}\|^2_{L^\infty(\R^d)}+\|\nabla V^{\eta}_{ij}\ast u_{\eta,j}\|^2_{L^\infty((0,T)\times\R^d)})\nonumber\\
\le& \frac{C}{N}\eta^{-2(d+1)},
\end{align}
where we have used $\|\nabla V^{\eta}_{ij}\|^2_{L^\infty(\R^d)}+\|\nabla V^{\eta}_{ij}\ast u_{\eta,j}\|^2_{L^\infty((0,T)\times\R^d)}\le C\eta^{-2(d+1)}$ in the last inequality.
We combine \eqref{I1}--\eqref{I3} to derive that
\begin{align*}
&\frac{d}{dt}\frac{1}{N}\mathcal{H}(u^N_{\eta,n}|u_{\eta}^{\otimes Nn})
+\sum_{i=1}^n\sum_{k=1}^N \frac{\sigma_i}{2N}\int_{\R^{dNn}}\Big|\nabla_{x^k_i}\log \frac{u^N_{\eta,n}}{u_\eta^{\otimes Nn}} \Big|^2u^N_{\eta,n} dx_1^1\cdots dx^N_n\\
\le& C\eta^{-2(d+2)}N^{-1+C\varepsilon}+C\eta^{-2(d+1)}N^{-1}\le CN^{-1+C\varepsilon},
\end{align*}
where we have used the assumption $\varepsilon\log N\geq \eta^{-2(d+2)}$.
This allows us to use \eqref{formula} to finish the proof of Proposition \ref{relative1}.
\end{proof}

\vskip5mm
Now we turn to the $L^1$-error estimate for $\prod_{i=1}^n u_{\eta,i}^{\otimes r_i}-\prod_{i=1}^n u_{i}^{\otimes r_i}$,
where $\prod_{i=1}^n u_{\eta,i}^{\otimes r_i}$ and $\prod_{i=1}^n u_{i}^{\otimes r_i}$ are the tensorised solutions of the intermediate PDE equations \eqref{macro2}
and the limiting PDE equations \eqref{macro1} respectively.
\begin{proposition}\label{relative2}
Let $\int_{\mathbb{R}^d} u^0_i(x)|x|^2\,dx<\infty$ for each $1\le i\le n$.
       Under Assumption \ref{ass}, we have
\begin{align}
\sup_{t\in [0,T]}\Big\|\prod_{i=1}^n u_{\eta,i}^{\otimes r_i}-\prod_{i=1}^n u_{i}^{\otimes r_i}\Big\|_{L^1(\R^{d|r|})}\le C\eta^{\frac{4}{4+d}},
\end{align}
where $C>0$ depends on $T$, $n$, $u^0$ and $r$.
\end{proposition}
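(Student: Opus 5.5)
First I reduce the tensorised difference to single-species differences by telescoping. Write $\prod_{i=1}^n u_{\eta,i}^{\otimes r_i}-\prod_{i=1}^n u_i^{\otimes r_i}$ as a sum of $|r|$ terms. In each term exactly one factor is $u_{\eta,i}-u_i$, and every other factor is either some $u_{\eta,j}$ or some $u_j$. All these factors are nonnegative with unit mass by Assumption \ref{ass}, so Fubini gives
\begin{align*}
\Big\|\prod_{i=1}^n u_{\eta,i}^{\otimes r_i}-\prod_{i=1}^n u_{i}^{\otimes r_i}\Big\|_{L^1(\R^{d|r|})}\le \sum_{i=1}^n r_i\,\|u_{\eta,i}(t)-u_i(t)\|_{L^1(\R^d)}.
\end{align*}
It therefore suffices to show $\sup_{t\in[0,T]}\|u_{\eta,i}(t)-u_i(t)\|_{L^1(\R^d)}\le C\eta^{4/(4+d)}$ for each $i$.

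To pass from the $L^2$ estimate \eqref{errorpde} to $L^1$ on all of $\R^d$, I split the domain into the ball $B_R$ and its complement. On the ball, Cauchy--Schwarz gives
\begin{align*}
\|u_{\eta,i}-u_i\|_{L^1(B_R)}\le CR^{d/2}\|u_{\eta,i}-u_i\|_{L^2}\le CR^{d/2}\eta .
\end{align*}
Outside the ball I use the tails:
\begin{align*}
\int_{|x|>R}|u_{\eta,i}-u_i|\,dx\le R^{-2}\int_{\R^d}|x|^2(u_{\eta,i}+u_i)\,dx .
\end{align*}
Optimizing with $R^{d/2}\eta=R^{-2}$, i.e. $R=\eta^{-2/(4+d)}$, makes both parts of order $\eta^{4/(4+d)}$. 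This gives exactly the claimed rate, provided the second moments are bounded uniformly in $\eta$ and $t\in[0,T]$.

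The main step is the uniform second-moment bound. I would test \eqref{macro2} against $|x|^2$ and integrate by parts, which gives
\begin{align*}
\frac{d}{dt}\int |x|^2u_{\eta,i}\,dx=2d\sigma_i-2\sum_j\int x\cdot(\nabla V^\eta_{ij}\ast u_{\eta,j})\,u_{\eta,i}\,dx .
\end{align*}
The drift term must be controlled without the bad factor $\|\nabla V^\eta_{ij}\|_\infty\sim\eta^{-d-1}$. To do this, write $\nabla V^\eta_{ij}\ast u_{\eta,j}=V^\eta_{ij}\ast\nabla u_{\eta,j}$. Young's inequality with $\|V^\eta_{ij}\|_{L^1}=\|V_{ij}\|_{L^1}$ gives $\|\nabla V^\eta_{ij}\ast u_{\eta,j}\|_{L^\infty}\le C\|\nabla u_{\eta,j}\|_{L^\infty}\le C\|u_{\eta,j}\|_{H^s}$, since $s>d/2+1$. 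This is bounded uniformly in $\eta$ by Assumption \ref{ass}. Then $|x|\le 1+|x|^2$ gives
\begin{align*}
\frac{d}{dt}M_\eta\le C(1+M_\eta),\qquad M_\eta(t):=\sum_i\int|x|^2u_{\eta,i}\,dx ,
\end{align*}
and Gronwall yields $\sup_{t\le T}M_\eta(t)\le C(T,u^0)$. The same computation applies to \eqref{macro1}, whose drift is $a_{ij}\nabla u_j$.

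To justify the integration by parts, I would work with a cutoff weight $|x|^2\chi(x/L)$ and let $L\to\infty$ by monotone convergence. Alternatively, the moments can be read off from the SDEs \eqref{micro2} and \eqref{micro3}, whose drifts are bounded and whose initial data have finite second moments by hypothesis.

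Combining the three steps gives the proposition, with $C$ depending on $T$, $n$, $u^0$ and $r$.
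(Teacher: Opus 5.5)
Your proposal is correct and follows essentially the same route as the paper. Both arguments use the uniform second-moment bound obtained by testing \eqref{macro2} and \eqref{macro1} against $|x|^2$, with $\nabla V^\eta_{ij}\ast u_{\eta,j}=V^\eta_{ij}\ast\nabla u_{\eta,j}$, the $\eta$-uniform $H^s$ bound and Gronwall, followed by the ball/complement splitting with $R=\eta^{-2/(4+d)}$; your explicit telescoping reduction from the tensor products to the single-species $L^1$ differences is a step the paper leaves implicit.
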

\begin{proof}
A multiplication of \eqref{macro2} by $|x|^2$ and integration over $\R^d$ lead to
\begin{align*}
\frac{d}{dt}\int_{\R^d} u_{\eta,i}|x|^2 dx
=&\sigma_i \int_{\R^d}u_{\eta,i}\Delta|x|^2 dx-\int_{\R^d}\sum_{j=1}^n u_{\eta,i}\nabla V^\eta_{ij}\ast u_{\eta,j}\cdot \nabla |x|^2dx\\
=& 2d\sigma_i\int_{\R^d}u_{\eta,i} dx-2\sum_{j=1}^n \int_{\R^d}u_{\eta,i}\nabla V^\eta_{\ij}\ast u_{\eta,j}\cdot x dx.
\end{align*}
Young's convolution inequality and the $L^\infty(0,T;H^s(\R^d))$ $(s>d/2+2)$ bound for $u_{\eta,j}$ give
\begin{align*}
\|\nabla V^\eta_{ij}\ast u_{\eta,j}\|_{L^\infty((0,T)\times\R^d)}\le C\|V^\eta_{ij}\|_{L^1(\R^d)}\|\nabla u_{\eta,j}\|_{L^\infty((0,T)\times\R^d)}\le C.
\end{align*}
Thus, in accordance with the $L^\infty(0,T;L^1(\R^d))$ bound for $u_{\eta,i}$ and Young's inequality, we achieve that
\begin{align*}
&\frac{d}{dt}\int_{\R^d} u_{\eta,i}|x|^2 dx\\
\le& 2d\sigma_i\int_{\R^d}u_{\eta,i} dx
+C\sum_{j=1}^n\|\nabla V^\eta_{\ij}\ast u_{\eta,j}\|_{L^\infty((0,T)\times\R^d)}\Big(\int_{\R^d}u_{\eta,i} dx+\int_{\R^d}u_{\eta,i}|x|^2 dx \Big)\\
\le& C+C\int_{\R^d}u_{\eta,i}|x|^2 dx,
\end{align*}
this allows us to use Gronwall's inequality to get
\begin{align*}
\sup_{t\in [0,T]}\int_{\R^d}u_{\eta,i}|x|^2 dx\le C.
\end{align*}
In a similar way, we show that the $L^\infty(0,T;L^1(\R^d))$-norm of $u_i |x|^2$ is bounded as well.
Together with \eqref{errorpde}, we have
\begin{align*}
\int_{\R^d}|u_{\eta,i}-u_i| dx
\le& \int_{B_R(0)}|u_{\eta,i}-u_i| dx+\int_{B_R(0)^c}|u_{\eta,i}-u_i|\frac{|x|^2}{R^2} dx\\
\le& CR^{\frac{d}{2}}\|u_{\eta,i}-u_i\|_{L^2(\R^d)}+\frac{1}{R^2}\int_{B_R(0)^c}|u_{\eta,i}-u_i||x|^2 dx\\
\le& CR^{\frac{d}{2}}\eta+CR^{-2}.
\end{align*}
Choosing $R^{-1}=\eta^{\frac{2}{4+d}}$, we infer that
\begin{align}\label{relative3}
\sup_{t\in [0,T]}\int_{\R^d}|u_{\eta,i}-u_i| dx\le C\eta^{\frac{4}{4+d}},
\end{align}
finishing the proof of Proposition \ref{relative2}.
\end{proof}
We combine Proposition \ref{relative1} and Proposition \ref{relative2} to conclude that
\begin{align*}
\sup_{t\in [0,T]}\Big\|u^{N,r}_{\eta,n}-\prod_{i=1}^n u_{i}^{\otimes r_i}\Big\|_{L^1(\R^{d|r|})}
\le CN^{-\frac{1}{2}+\hat{C}\varepsilon}+C\eta^{\frac{4}{4+d}},
\end{align*}
where $\hat C>0$ depends on $n$, $T$, $\|\nabla V_{ij}\|_{W^{1,\infty}(\R^d)}$, $\sigma_{min}$, $u^0$ and r.
As long as $\varepsilon < \frac{1}{2\hat{C}}$, the condition $\varepsilon\log N \ge \eta^{-2(d+2)}$ implies
\[
(-\tfrac{1}{2}+\hat{C}\varepsilon)\log N
\le (-\tfrac{1}{2}\varepsilon^{-1}+\hat{C})\eta^{-2(d+2)}.
\]
Consequently,
\[
N^{-\frac{1}{2}+\hat{C}\varepsilon}
\le \exp\left\{ (-\tfrac{1}{2}\varepsilon^{-1}+\hat{C})\eta^{-2(d+2)} \right\}
\le \eta^{\frac{4}{4+d}},
\]
where the final bound follows from the parameter condition $\varepsilon < \frac{1}{2\hat{C}}$.
Therefore, it holds
\begin{align*}
\sup_{t\in [0,T]}\Big\|u^{N,r}_{\eta,n}-\prod_{i=1}^n u_{i}^{\otimes r_i}\Big\|_{L^1(\R^{d|r|})}
\le C\eta^{\frac{4}{4+d}}.
\end{align*}

\section{Proof of Theorem \ref{relativeLq}}
To complete the proof of Theorem \ref{relativeLq}, we start with the $L^p$-estimates $(2\le p<\infty)$ for $u^{N,r}_{\eta,n}$.
It follows from the equations \eqref{Kf} that $u^{N,r}_{\eta,n}$ satisfies
\begin{equation}
\label{macro5}
\left\{
\begin{aligned}
\partial_t u^{N,r}_{\eta,n}=&\sum_{i=1}^n  \sum_{k=1}^{r_i}\sigma_i \Delta_{x^k_i}u^{N,r}_{\eta,n}\\
&+\sum_{i=1}^n\sum_{k=1}^{r_i} {\rm{div}}_{x^k_i}\Big(\int_{\R^{d(Nn-|r|)}}\sum_{j=1}^n \frac{1}{N}\sum_{l=1}^N \nabla V^\eta_{ij}(x^k_i-x^l_j)u^N_{\eta,n}
\prod_{i=1}^n\prod_{k=r_i+1}^N dx^k_i \Big),\\
u^{N,r}_{\eta,n}(0)=&(u_1^0)^{\otimes r_1}\otimes\cdots\otimes (u_n^0)^{\otimes r_n}.
\end{aligned}
\right.
\end{equation}
Multiplying \eqref{macro5} by $p(u^{N,r}_{\eta,n})^{p-1}$ with $p\ge 2$ and integrating over $\R^{d|r|}$, we obtain that
\begin{align*}
&\frac{d}{dt}\int_{\R^{d|r|}}|u^{N,r}_{\eta,n}|^p \prod_{i=1}^n\prod_{k=1}^{r_i}dx^k_i\\
&+\sum_{i=1}^n \sigma_ip(p-1)\sum_{k=1}^{r_i}\int_{\R^{d|r|}}|u^{N,r}_{\eta,n}|^{p-2}|\nabla _{x^k_i}u^{N,r}_{\eta,n}|^2 \prod_{i=1}^n\prod_{k=1}^{r_i}dx^k_i\\
=& -p\sum_{i=1}^n\sum_{k=1}^{r_i}
\int_{\R^{d|r|}}\Big(\int_{\R^{d(Nn-|r|)}}\sum_{j=1}^n\frac{1}{N}\sum_{l=1}^N \nabla V^\eta_{ij}(x^k_i-x^l_j)u^N_{\eta,n}\prod_{i=1}^n\prod_{k=r_i+1}^N dx^k_i  \Big)\\
&\qquad\qquad\qquad\cdot \nabla _{x^k_i}(u^{N,r}_{\eta,n})^{p-1} \prod_{i=1}^n\prod_{k=1}^{r_i}dx^k_i.
\end{align*}
By H\"older's inequality and Young's inequality, we derive that
\begin{align*}
&\frac{d}{dt}\int_{\R^{d|r|}}|u^{N,r}_{\eta,n}|^p \prod_{i=1}^n\prod_{k=1}^{r_i}dx^k_i\\
&+\sum_{i=1}^n \sigma_ip(p-1)\sum_{k=1}^{r_i}\int_{\R^{d|r|}}|u^{N,r}_{\eta,n}|^{p-2}|\nabla _{x^k_i}u^{N,r}_{\eta,n}|^2 \prod_{i=1}^n\prod_{k=1}^{r_i}dx^k_i\\
\le& \sum_{i=1}^n \frac{\sigma_ip(p-1)}{2}\sum_{k=1}^{r_i}\int_{\R^{d|r|}}|u^{N,r}_{\eta,n}|^{p-2}|\nabla _{x^k_i}u^{N,r}_{\eta,n}|^2
\prod_{i=1}^n\prod_{k=1}^{r_i}dx^k_i\\
&+C\sup_{1\le i,j\le n}\|\nabla V^\eta_{ij}\|^2_{L^\infty(\R^d)}
\int_{\R^{d|r|}}|u^{N,r}_{\eta,n}|^{p-2}
\Big(\int_{\R^{d(Nn-|r|)}}u^N_{\eta,n} \prod_{i=1}^n\prod_{k=r_i+1}^N dx^k_i \Big)^2 \prod_{i=1}^n\prod_{k=1}^{r_i}dx^k_i\\
\le& \sum_{i=1}^n \frac{\sigma_ip(p-1)}{2}\sum_{k=1}^{r_i}
\int_{\R^{d|r|}}|u^{N,r}_{\eta,n}|^{p-2}|\nabla _{x^k_i}u^{N,r}_{\eta,n}|^2  \prod_{i=1}^n\prod_{k=1}^{r_i}dx^k_i\\
&+C\eta^{-2(d+1)}
\int_{\R^{d|r|}}|u^{N,r}_{\eta,n}|^{p}  \prod_{i=1}^n\prod_{k=1}^{r_i}dx^k_i,
\end{align*}
where $C>0$ appeared in this section depends on $n$, $T$, $\|\nabla V_{ij}\|_{W^{1,\infty}(\R^d)}$, $\sigma_{min}$, $u^0$ and r.
In view of $u^0_i\in L^p(\R^d)$ $(1\le i\le n)$ and Gronwall's inequality, we have
\begin{align*}
\sup_{t\in[0,T]}\int_{\R^{d|r|}}|u^{N,r}_{\eta,n}|^p  \prod_{i=1}^n\prod_{k=1}^{r_i}dx^k_i
\le C\exp\{CT\eta^{-2(d+1)}\}\le C\exp\{CT\varepsilon\log N\}\le CN^{CT\varepsilon}.
\end{align*}
Due to the interpolation inequality and Proposition \ref{relative1}, we deduce that
\begin{align*}
\Big\|u^{N,r}_{\eta,n}-\prod_{i=1}^n u_{\eta,i}^{\otimes r_i}\Big\|_{L^q(\R^{d|r|})}
\le& C\Big\|u^{N,r}_{\eta,n}-\prod_{i=1}^n u_{\eta,i}^{\otimes r_i}\Big\|_{L^{2q}(\R^{d|r|})}^{\frac{2(q-1)}{2q-1}}
\Big\|u^{N,r}_{\eta,n}-\prod_{i=1}^n u_{\eta,i}^{\otimes r_i}\Big\|_{L^1(\R^{d|r|})}^{\frac{1}{2q-1}}\nonumber\\
\le& CN^{\frac{1}{2q-1}(CT\varepsilon\frac{q-1}{q}+C\varepsilon-\frac{1}{2})}\le CN^{\frac{1}{2q-1}(-\frac{1}{2}+\tilde{C}\varepsilon)},
\end{align*}
where $\tilde C>0$ depends on $n$, $T$, $\|\nabla V_{ij}\|_{W^{1,\infty}(\R^d)}$, $\sigma_{min}$, $u^0$ and r.
As long as $\varepsilon<\frac{1}{2\tilde{C}}$, we can use the assumption $\varepsilon\log N\geq \eta^{-2(d+2)}$ to conclude that
\begin{align}\label{relative4}
\Big\|u^{N,r}_{\eta,n}-\prod_{i=1}^n u_{\eta,i}^{\otimes r_i}\Big\|_{L^q(\R^{d|r|})}
\le C\eta.
\end{align}

Taking into account the error estimates \eqref{errorpde}, the $L^\infty(0,T;H^s(\R^d))$ $(s>d/2+2)$ bounds for $u_{\eta,i}$ and $u_i$ and the interpolation inequality, we infer that, for any $2\le q<\infty$,
\begin{align*}
\|u_{\eta,i}-u_i\|_{L^\infty(0,T;L^q(\R^d))}
\le C\|u_{\eta,i}-u_i\|^{\frac{2}{q}}_{L^\infty(0,T;L^2(\R^d))}\|u_{\eta,i}-u_i\|^{1-\frac{2}{q}}_{L^\infty((0,T)\times\R^d)}\le C\eta^{\frac{2}{q}},
\end{align*}
which shows that
\begin{align*}
\Big\|\prod_{i=1}^n u_{\eta,i}^{\otimes r_i}-\prod_{i=1}^n u_i^{\otimes r_i}\Big\|_{L^\infty(0,T;L^q(\R^{d|r|}))}\le C\eta^{\frac{2}{q}}.
\end{align*}
For the case $1<q<2$, in view of the error estimates \eqref{errorpde}, the $L^\infty(0,T;L^1(\R^d))$ bounds for $u_{\eta,i}$ and $u_i$ and the interpolation inequality, we have
\begin{align*}
\|u_{\eta,i}-u_i\|_{L^\infty(0,T;L^q(\R^d))}\le C\|u_{\eta,i}-u_i\|^{\frac{2-q}{q}}_{L^\infty(0,T;L^1(\R^d))}\|u_{\eta,i}-u_i\|^{\frac{2(q-1)}{q}}_{L^\infty(0,T;L^2(\R^d))}
\le C\eta^{\frac{2(q-1)}{q}},
\end{align*}
therefore,
\begin{align*}
\Big\|\prod_{i=1}^n u_{\eta,i}^{\otimes r_i}-\prod_{i=1}^n u^{\otimes r_i}_i\Big\|_{L^\infty(0,T;L^q(\R^{d|r|}))}\le C\eta^{\frac{2(q-1)}{q}}.
\end{align*}
In accordance with the estimate \eqref{relative4}, we finish the proof of Theorem \ref{relativeLq}.

\vspace{2mm}
\textbf{Conflict of interest.} The authors do not have any possible conflicts of interest.

\vspace{2mm}

\textbf{Data availability statement.}
 Data sharing is not applicable to this article as no data sets were generated or analyzed during the current study.


\begin{thebibliography}{99}

\bibitem{BJW2019} D. Bresch, P.-E. Jabin and Z. Wang, On mean-field limits and quantitative estimates with a large class of
      singular kernels: application to the Patlak--Keller--Segel model, C. R. Math., 357 (2019), 708--720.

\bibitem{CG2026} J.-A. Carrillo and S. Guo, Interacting particle approximation of cross-diffusion systems, Nonlinearity, 39 (2026), 025009.

\bibitem{CGH} J.-A. Carrillo, S. Guo and A. Holzinger, Propagation of chaos for multi-species moderately interacting particle systems up to Newtonian singularity,
       	arXiv:2501.03087.

\bibitem{CDHJ} L. Chen, E.S. Daus, A. Holzinger and A. J\"ungel, Rigorous derivation of population cross-diffusion systems from moderately interacting particle systems,
                 J. Nonlinear Sci., 31 (2021), Paper No. 94, 38 pp.

\bibitem{CDJ} L. Chen, E.S. Daus and A. J\"ungel, Rigorous mean-field limit and cross-diffusion, Z. Angew. Math. Phys., 70 (2019), Paper No. 122, 21pp.

\bibitem{CHH} L. Chen, A. Holzinger and X. Huo, Quantitative convergence in relative entropy for a moderately interacting particle system on $\R^d$, arXiv:2311.01980.

\bibitem{CxDJ} X. Chen, E.S. Daus and A. J\"ungel, Global existence analysis of cross-diffusion population systems for multiple species, Arch. Rational Mech. Anal., 227 (2018), 715--747.

\bibitem{DDD} E.S. Daus, L. Desvillettes and H. Dietert, About the entropic structure of detailed balanced multi-species cross-diffusion equations, J. Differ. Equ., 266 (2019), 3861--3882.

\bibitem{FM2015} J. Fontbona and S. M\'el\'eard, Non local Lotka--Volterra system with cross-diffusion in an heterogeneous medium, J. Math. Biol., 70 (2015), 829--854.

\bibitem{JW2016} P.-E. Jabin and Z. Wang, Mean field limit and propagation of chaos for Vlasov systems with bounded forces, J. Funct. Anal., 271 (2016), 3588--3627.

\bibitem{JW2018} P.-E. Jabin and Z. Wang, Quantitative estimates of propagation of chaos for stochastic systems with $W^{-1,\infty}$ kernels, Invent. Math., 214 (2018), 523--591.

\bibitem{Ju16} A. J\"ungel, Entropy methods for diffusive partial differential equations, BCAM SpringerBriefs, Springer, 2016.

\bibitem{Lacker2023} D. Lacker, Hierarchies, entropy, and quantitative propagation of chaos for mean field diffusions, Probab. Math. Phys., 4 (2023), 377--432.

\bibitem{MM} L. Miclo and P. Del Moral, Genealogies and increasing propagation of chaos for feynman-kac and genetic models, Ann. Appl. Probab., 11 (2001), 1166--1198.

\bibitem{Oelschlager1987} K. Oelschl\"ager, A fluctuation theorem for moderately interacting diffusion processes, Probab. Theory Relat. Fields, 74 (1987), 591--616.

\bibitem{Oelschlager1991} K. Oelschl\"ager, On the connection between Hamiltonian many-particle systems and the hydrodynamical equations, Arch. Ration. Mech. Anal., 115 (1991), 297--310.

\bibitem{S-R} L. Saint-Raymond, Hydrodynamic limits of the Boltzmann equation, Lecture Notes in Mathematics 1971, Springer, 2009.

\bibitem{Seo} I. Seo, Scaling limit of two-component interacting Brownian motions, Ann. Prob., 46 (2018), 2038--2063.

\bibitem{Serfaty2018} S. Serfaty, Systems of points with coulomb interactions. Proceedings of the International Congress of Mathematicians (Rio de Janeiro, 2018), World Scientific, Singapore, 2018, 935--977.

\bibitem{Serfaty2020} S. Serfaty, Mean field limit for coulomb-type flows, Duke Math. J., 169 (2020), 2887--2935.

\bibitem{Varadhan} S. Varadhan, Relative entropy and hydrodynamic limits, Stochastic Processes, Springer, New York, 1993, 329--336.

\end{thebibliography}
\end{document}